\newtheorem{thm}{Theorem}
\newtheorem{remark}{Remark}
\newtheorem{cor}{Corollary}
\newtheorem{ex}{Example}
\newcommand{\ackname}{Acknowledgements}
\newcommand*{\email}[1]{
    \normalsize\href{mailto:#1}{#1}
}
\newcommand{\norml}{\left| \left|}
\newcommand{\normr}{\right| \right|}
\title{Martingale decomposition of a $L^2$ space with nonlinear stochastic integrals}
\author{Clarence Simard \\ \email{simard.clarence@uqam.ca}}
\affil{D\'epartement de math\'ematiques, Universit\'e du Qu\'ebec \`a Montr\'eal.} % Your postal address goes
\begin{document}
\maketitle

%\title{Martingale decomposition of a $L^2$ space with nonlinear stochastic integrals } % insert title - use \\ if it requires more than one line.

%\author[Universit\'e du Qu\'ebec \`a Montr\'eal]{Clarence Simard} % Affiliation is just the name of your university or institution

\begin{abstract}

This paper generalizes the Kunita-Watanabe decomposition of a $L^2$ space. The generalization comes from using nonlinear stochastic integrals where the integrator is a family of continuous martingales bounded in $L^2$. This result is also the solution of an optimization problem in $L^2$. First, martingales are assumed to be stochastic integrals. Then, to get the general result, it is shown that the regularity of the family of martingales respect to its spatial parameter is inherited by the integrands in the integral representation of the martingales. Finally, an example showing how the results of this paper, with the Clark-Ocone formula, can be applied to polynomial functions of Brownian integrals.
\end{abstract}
%
%\keywords{Stochastic calculus; optimization; martingales; orthogonal decomposition, Clark-Ocone formula. } % insert keywords separated by a semicolon
%
%\ams{60G44}{} % insert the primary Maths Subject Classification number in the first bracket
%         % and the secondary ams number(s) in the second bracket
         % e.g. \ams{60E20}{49G03;49F10}

\section{Introduction}

Nonlinear stochastic integrals, sometimes called stochastic line integrals, are stochastic integrals where the integrator is a family of semimartingales. Let $\{M(x); x\in E\subset \mathbb{R} \}$ be a family of semimartingales indexed by $x$ ($x$ is often called spatial parameter). Let $\xi$ be a predictable process with values in $E$, $\int M(dt, \xi_t)$ denotes the nonlinear stochastic integral of $\xi$ respect to $M$. To get an intuitive understanding, let $\xi$ be a simple predictable process defined by $\xi_t = \xi_{t_k} \in \mathcal{F}_{t_{k-1}}$ for $t_{k-1} <t\leq t_k$ where $t_0=0<t_1<t_2<\cdots $ and $\{\mathcal{F}_t\}_{t\geq0}$ is a filtration, then, $\int_0^TM(ds, \xi_s)=\sum_{k\geq 1}\left\{ M(t_k\wedge T, \xi_{t_k}) - M(t_{k-1}\wedge T, \xi_{t_k})\right\}$. For continuous predictable processes, the integral is defined alike the standard stochastic integral, see \cite{Kunita_1990}. 

This stochastic integral has been defined for different families of semimartingales in \cite{Chitashvili_1983}.  Those results can also be found in \cite{Chitashvili/Mania_1996} and  \cite{Chitashvili/Mania_1999}. More recently, \cite{Kuhn_2012} extended the integral to a family of semimartingales which is not necessarily continuous respect to the spatial parameter. Detailed construction of nonlinear stochastic integrals can also be found under different assumptions in \cite{Kunita_1990} and \\ \cite{Carmona/Nualart_1990}.

Applications of nonlinear stochastic integrals can be found in mathematical finance, more specifically for modeling illiquid markets. In those models, a nonlinear stochastic integral is used to defined the value of the trading portfolio in which the cash flow of a transaction is a nonlinear function of the number of shares traded. One can look at \cite{Cetin/Jarrow/Protter_2004} and \cite{Bank/Baum_2004} for examples of financial applications. The generalization of the Kunita-Watanabe decomposition found in this paper could lead to an extension of the theory of quadratic hedging developed in \cite{Schweizer_1994}. 

In this paper, we look at the problem of approximating a random variable with nonlinear stochastic integrals. This problem is written as
\[
\inf_{\theta \in \mathcal{I}^M}\mathbb{E}\left[ \left( H - \int M(dt,\theta_t)\right)^2\right]
\]
where $H$ is a square-integrable random variable, $\{M(x); x\in \mathbb{R}\}$ is a family a martingales and $\mathcal{I}^M$ is a set of integrands. Without additional assumption on $\{M(x)\}$, this is usually an optimization problem over a non-convex set. Therefore, the uniqueness of the solution or even the existence is not guaranteed. The main result of this paper is to characterize, when it exists, the solution of this problem and discuss sufficient conditions for the existence and uniqueness of the solution. It will be shown that this characterization is a generalization of the Kunita-Watanabe decomposition, \cite{Kunita/Watanabe_1967}.

The remainder of this paper is organized in the following way. The next two sections establish the conditions of the probability space, introduce some notations and state the optimizing problem to be solved. Section~\ref{sec:main_res} presents the solution of the optimization problem under the simplifying assumption that the martingale family is defined by a known family of stochastic integrals. The generalization of the Kunita-Watanabe decomposition is also discussed in this section. Section~\ref{sec:general_case} provides the preliminary results required for the general version of the main theorem which is found in Section~\ref{sec:main_thm_general}. Finally, Section~\ref{sec:example} contains an example.

\section{Probability space and notations}\label{sec:space}

Let $(\Omega, \mathcal{F}, \mathbb{F}, \mathbb{P})$ be a filtered probability space where $\mathcal{F}:=\{\mathcal{F}_t;t\geq 0\}$ is a right-continuous filtration, complete respect to $\mathbb{P}$ and where $\mathbb{F} = \lim_{t\to \infty} \mathcal{F}_t$ is the smallest $\sigma-$algebra containing all $\mathcal{F}_t$. The results of this paper require that all adapted processes  are continuous, therefore, it is assumed that for any stopping times $T$ and $\{T_n\}$ such that $T_n \uparrow T$ then $\vee_n \mathcal{F}_{T_n} = \mathcal{F}_T$. 

First, define $$\mathcal{M}=\{ X:[0,\infty)\times \Omega \rightarrow \mathbb{R} ; X \text{ is a } \text{continuous martingale adapted to } \mathcal{F}\},$$ that is, for $X\in \mathcal{M}$, $t\rightarrow X_t$ is continuous a.s.,  $\mathbb{E}[|X_t|]<\infty $ for all $t \geq 0$ and $\mathbb{E}[X_t |\mathcal{F}_s] = X_s$ for all $0\leq s \leq t$. Then, let 
$$L^2(\mathbb{P}) =\left\{Y:\Omega \rightarrow \mathbb{R}; Y\in \mathbb{F}\text{ and } ||Y||_{L^2}<\infty \right\}$$ be
the space of square-integrable random variables where $||Y||_{L^2}^2 = \mathbb{E}[Y^2]$. In this paper, it is assumed that martingales are continuous and bounded in $L^2(\mathbb{P})$, that is, they belong to the set $$\mathcal{M}^2=\{X\in \mathcal{M}; \mathbb{E}[\sup_{t\geq 0}(X_t)^2]<\infty \}.$$ However, without loss of generality, the results will be stated for $$\mathcal{M}_0^2=\{X\in \mathcal{M}^2; X_0 \equiv 0  \}$$ and the space $L^2_0(\mathbb{P}) = \{X \in L^2(\mathbb{P}); \mathbb{E}[X] = 0\}$. The general case is recovered by translating processes and random variables. The following convention is also established; since there is an isometry between random variables in $L^2(\mathbb{P})$ (resp. $L^2_0(\mathbb{P})$) and martingales in $\mathcal{M}^2$ (resp. $\mathcal{M}_0^2$), the same notation will be used to define an element of $L^2(\mathbb{P})$ (resp. $L^2_0(\mathbb{P})$) and an element of $\mathcal{M}^2$ (resp. $\mathcal{M}^2_0$). For instance, for $X\in L^2(\mathbb{P})$, $X$ defines the random variable as well as the almost sure limit, $X=\lim_{t\to \infty}X_t$, of the martingale $X:=\{X_t;t\geq 0\}\in \mathcal{M}^2$. Finally, it is assumed that $(\Omega, \mathbb{F})$ is separable, therefore, there exists a countable basis for $L^2(\mathbb{P})$.

\section{Optimization problem}\label{sec:problem}

In this section, the family of martingales, $\{M(x);x\in \mathbb{R}\}$, used as an integrator is defined and the optimizing problem is stated.

Following the assumptions on the probability space $(\Omega, \mathcal{F}, \mathbb{F}, \mathbb{P})$, every martingale in $\mathcal{M}^2$ can be written as a stochastic integral. For this part of the paper, it is assumed that the integral representation of the family of martingales is known. Consequently, conditions can be directly imposed on the integrands, which simplifies the presentation of the main result.

The family of martingales is defined as 
\begin{align*}
M:   [0,\infty) \times \mathbb{R} \times \Omega &\rightarrow \mathbb{R} \\
(t,x,\omega) &\mapsto M(t,x,\omega)
\end{align*}
such that $M(x)$ is in $\mathcal{M}^2_0$ for each $x \in \mathbb{R}.$  Following the previous convention, $M(x)\in L^2_0(\mathbb{P})$ where $M(x)=\lim_{t\to \infty} M(t,x)$ a.s.

In the following, the problem that is solved is the approximation of a square-integrable random variable $H\in L^2_0(\mathbb{P})$ with the stochastic integral $\int M(ds,\theta_s)$.  In other words, the problem to solve is:
\begin{equation}\label{eq:main_prob}
\inf_{\theta \in \mathcal{I}^M}\norml H - \int M(ds,\theta_s)\normr_{L^2}
\end{equation}
where $\mathcal{I}^M$ is a set of suitable integrands which is yet to be defined. Intuitively, this problem can be seen as minimizing the distance from a point $H$ to a curve $\int M(ds,\theta_s)$ in a linear space.

 From the conditions imposed on $(\Omega, \mathcal{F},\mathbb{F},\mathbb{P})$, there exists a martingale $B\in \mathcal{M}^2_0$ and a family of continuous predictable processes $\{\mu(x);x\in \mathbb{R}\}$, 
\begin{align}
\mu: [0,\infty)\times \mathbb{R}\times \Omega &\rightarrow \mathbb{R} \notag \\
(t,x,\omega) &\mapsto \mu(t,x,\omega), \notag
\end{align} 
such that 
\begin{equation}\label{eq:mart_representation}
M(x) =\int \mu(t,x) dB_t.
\end{equation}
The representation in Equation~\eqref{eq:mart_representation} is a direct application of the Kunita-Watanabe decomposition and the existence of a countable basis for $\mathcal{M}^2$. The later is a consequence of the assumption of separability of the space $(\Omega,\mathbb{F})$. Following the integral representation of $M := \{M(x);x\in \mathbb{R}\}$, one finds a more tractable expression for the nonlinear stochastic integral respect to $M$, that is, 
\[
\int M(dt,\theta_t) = \int \mu(t,\theta_t)dB_t.
\]

\section{Main results}\label{sec:main_res}

The main result requires the function $x\mapsto \mu(t,x)$ in the representation \eqref{eq:mart_representation} to be smooth enough. While it is a simple condition to impose, it is less so to deduce the properties of $\mu(x)$ from $M(x)$. In this section, in order to provide a clearer presentation, the main result are established under the assumption that the integrand $\mu(x)$ in the integral representation \eqref{eq:mart_representation} is known.  However, this assumption is not satisfactory in the general case since the Kunita-Watanabe decomposition only gives the existence of the integrand in the integral representation. The general case requires results that are provided in Section~\ref{sec:regularity}.

 Let $\mathcal{I}^M$ be defined as $$\mathcal{I}^M = \left\{\theta; \theta \text{ is predictable and } \int M(ds,\theta_s),  \int \frac{\partial}{\partial x}M(ds,\theta_s) \in L^2(\mathbb{P})\right\}$$
and let $$\mathcal{H}^M = \left\{\int M(ds,\theta_s) ; \theta \in \mathcal{I}^M \right\}.$$ The next theorem characterizes the solution to \eqref{eq:main_prob}.

 A simple condition for the existence of a solution is that $\mathcal{H}^M$ is closed and, since it is assumed that the integral representation \eqref{eq:mart_representation} is known, one can simply assume that $\mu(t,\mathbb{R})$ is closed a.s. for all $t\geq 0$. Note that in the following, $\mathcal{C}^{m}$ is the set of $m$ times continuously differentiable functions.

\begin{thm}[Main theorem]\label{thm:main_thm}
Let $\mu:=\{\mu(t,x);t\geq 0, x\in \mathbb{R}\}$ be a family of continuous predictable process such that $x\mapsto \mu(t,x)$ is a.s. in $\mathcal{C}^1$ for all $t\geq 0$ and let $H\in L^2_0(\mathbb{P})$. Assume that for all $x\in \mathbb{R}$, $M(x):=\int\mu(t,x)dB_t \in L^2_0(\mathbb{P})$,  $\frac{\partial}{\partial x}M(x) := \int \frac{\partial}{\partial x}\mu(t,x) dB_t \in L^2(\mathbb{P})$ and that $\mu(t,\mathbb{R})$ is closed a.s. for all $t\geq 0$. Then, there exists $\theta^H \in \mathcal{I}^M$ such that 

\[
H = \int M(dt,\theta^H_t) + L^H
\]
where $L^H = H-\int M(dt,\theta^H_t)$ is orthogonal to $\int \frac{\partial}{\partial x}M(dt,\theta^H_t)$, i.e.
\begin{equation}\label{eq:orthogon}
\mathbb{E}\left[ \left( H - \int M(dt,\theta^H_t)\right)\int \frac{\partial}{\partial x} M(dt,\theta^H_t) \right] =0. 
\end{equation}

Moreover, 
\[
\norml H - \int M(dt,\theta^H_t) \normr_{L^2} = \inf_{\theta \in \mathcal{I}^M} \norml  H - \int M(dt,\theta_t) \normr_{L^2}
\]

\end{thm}

\begin{proof}
First, take a sequence $\{X^n\}\subset \mathcal{H}^M$ such that \[\norml H - X^n\normr_{L^2}  \rightarrow \inf_{X\in \mathcal{H}^M} \norml	H - X\normr_{L^2} . \] 
Thanks to the parallelogram law,  \cite{Williams_1991} Chap. 6, there exists $X^H$ such that $X^n \rightarrow X^H$ in $L^2(\mathbb{P})$. From the assumption that $\mu(t,\mathbb{R})$ is closed, hence that $\mathcal{H}^M$ is closed, there exists $\theta^n,\theta^H\in \mathcal{I}^M$ such that $\int M(dt, \theta^n_t) = X^n$ and $\int M(dt,\theta^H_t)=X^H$.

Let $F(\epsilon) = \norml H - \int M(dt,\theta^H_t+\epsilon)\normr_{L^2}^2$, then we know that \\ $\frac{d}{d\epsilon}F(\epsilon)|_{\epsilon = 0} = 0$. We now need to show that one gets the same result by differentiating inside the norm. 

Define the sequence of stopping times 
{\small
\[
\tau_n = \inf\left\{t>0 ; \sup_{\{\phi\}, |\phi_s|<1}\left|\int_0^t\frac{\partial}{\partial x}M(ds,\theta^H_s + \phi_s)- \int_0^t \frac{\partial}{\partial x}M(ds,\theta^H_s) \right|\geq n  \right\},
\] 
}
where the supremum is taken over predictable processes $\{\phi\}$ with $|\phi_s|<1$ for all $s\geq 0$.
For each $n$, 
\begin{align}\label{eq:diff_1}
&\lim_{\epsilon \to 0}\mathbb{E}\left[ \int_0^{t\wedge \tau_n} \left( \frac{\mu(s,\theta^H_s +\epsilon) - \mu(s,\theta^H_s)}{\epsilon} - \frac{\partial }{\partial x}\mu(s,\theta^H_s) \right)^2d[B]_s\right] \notag \\
& \quad = \lim_{\epsilon \to 0}\mathbb{E}\left[ \int_0^{t\wedge \tau_n} \left( \frac{\partial}{\partial x}\mu(s,\theta^H_s +\phi_s(\epsilon)) - \frac{\partial }{\partial x}\mu(s,\theta^H_s) \right)^2d[B]_s\right] \notag \notag \\
& \quad =\mathbb{E}\left[ \lim_{\epsilon \to 0} \left(\int_0^{t\wedge \tau_n}\frac{\partial}{\partial x}M(ds,\theta^H_s +\phi_s(\epsilon)) - \int_0^{t\wedge \tau_n} \frac{\partial}{\partial x}M(ds,\theta^H_s) \right)^2\right] =0, 
\end{align}
where the process $\phi(\epsilon)$ is predictable and  $\phi_s(\epsilon) \in (\theta^H_s -\epsilon, \theta^H_s+\epsilon)$.
We see that for each $n$, $\int_0^{t\wedge \tau_n}M(ds,\theta^H_s)$ is a.s. differentiable. 

Now, from the Kunita-Watanabe decomposition, there exists a predictable process $h$ and a $L^2(\mathbb{P})$-martingale $\lambda^H$ such that $H = \int h_sdB_s + \lambda^H$. Since $\lambda^H$ is strongly orthogonal to $B$, it is also strongly orthogonal to $\int M(ds, \theta_s)$ and $\int \frac{\partial }{\partial x}M(ds,\theta_s)$ for any $\theta \in \mathcal{I}^M$. Therefore, we can set $\lambda^M \equiv 0$ without loss of generality.  

Let $F_n(t,\epsilon) = \norml \int_0^{t\wedge \tau_n}\left(h_s - \mu(ds,\theta^H_s + \epsilon) \right)dB_s \normr^2_{L^2}$. Then, using Equation \eqref{eq:diff_1}, one finds that 

\begin{equation}\label{eq:dF_e=0}
\frac{\partial}{\partial \epsilon}F_n(t,\epsilon)|_{\epsilon = 0} = (-2)\mathbb{E}\left[\int_0^{t\wedge \tau_n}\left(h_s - \mu(s,\theta^H_s) \right)\frac{\partial }{\partial x}\mu(s,\theta^H_s)d[B]_s \right] = 0.
\end{equation}

Finally, from the Cauchy-Schwartz inequality and the fact that $\theta^H\in \mathcal{I}_M$,
\begin{align}
&\mathbb{E}\left[\left(H-\int M(ds,\theta^H_s)\right) \int \frac{\partial}{\partial x}M(ds,\theta^H_s)  \right] \notag \\
 & \quad \leq \norml H-\int M(ds,\theta^H_s)\normr^2 \norml \int \frac{\partial}{\partial x}M(ds,\theta^H_s) \normr^2 <\infty . \notag 
\end{align}
This inequality allows us to take the limit respect to $n$ and $t$ in \eqref{eq:dF_e=0} to show that 
\[
\mathbb{E}\left[\left(H-\int M(ds,\theta^H_s)\right) \int \frac{\partial}{\partial x}M(ds,\theta^H_s)  \right] =0,
\]
which concludes the proof.
\end{proof}

\begin{remark}
In Theorem \ref{thm:main_thm}, the condition that $\mu(t,\mathbb{R})$ is closed is only required to asses the existence of the solution in the general setting.  For an explicit problem, if the characterization in Theorem \ref{thm:main_thm} gives rise to a minimizing integrand in $\mathcal{I}^M$, then the closedness is irrelevant. 
\end{remark}

As opposed to the Kunita-Watanabe decomposition, the characterization of $\theta^H$ in Theorem \ref{thm:main_thm} does not guarantee that a process satisfying Equation \eqref{eq:orthogon} is the minimizing integrand. Since the convexity of $\mathcal{H}^M$ is not assumed, a process satisfying Equation \eqref{eq:orthogon} could give a local minimum. A simple condition for the set $\mathcal{H}^M$ to be convex is that the function $x\mapsto \mu(t,x)$ is a.s. convex for all $t\geq 0$.

In the case where the Kunita-Watanabe decomposition of $H$ is known, the next corollary shows that the characterization of $\theta^H$ becomes an almost sure characterization. 

\begin{cor}\label{cor:as_solution}
Assume the conditions of Theorem \ref{thm:main_thm} are satisfied and that $\mathcal{I}^B\subset \mathcal{I}^M$ where $\mathcal{I}^B = \left\{ \theta: \theta \text{ is predictable and uniformly bounded}\right\}$. Let $H$ has the following Kunita-Watanabe decomposition, $H = \int h_s dB_s + \lambda^H$, then 
\[
\left( h_s- \mu(s,\theta^H_s)\right)  \frac{\partial}{\partial x}\mu(s,\theta^H_s) \equiv 0
\]
for all $s \notin \Gamma$ where $\int \mathbf{1}_{\Gamma}(t) d[B]_t = 0$.
\end{cor}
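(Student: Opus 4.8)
The plan is to upgrade the single scalar orthogonality relation \eqref{eq:orthogon} of Theorem \ref{thm:main_thm} into a pointwise identity by testing the optimality of $\theta^H$ against a whole family of perturbation directions, rather than the single constant shift $\epsilon$ used in the theorem. The hypothesis $\mathcal{I}^B \subset \mathcal{I}^M$ is precisely what makes this possible: it guarantees that for any uniformly bounded predictable $\eta$ the perturbed integrand $\theta^H + \epsilon\eta$ is admissible (for $|\epsilon|$ small this follows from $\theta^H \in \mathcal{I}^M$ together with the $C^{1,\delta}(K)$ bound of Theorem \ref{thm:diff} and $K \in L^2(P)$, which control the $L^2$-distance between the integrals taken at $\theta^H$ and at $\theta^H + \epsilon\eta$).

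First I would rewrite \eqref{eq:orthogon} in terms of $h$ and $\mu$. Using the Kunita-Watanabe decomposition $H = \int h_s\, dB_s + \lambda^H$ with $\lambda^H$ strongly orthogonal to $B$, and the identities $\int M(ds,\theta^H_s) = \int \mu(s,\theta^H_s)\,dB_s$ and $\int \frac{\partial}{\partial x}M(ds,\theta^H_s) = \int \frac{\partial}{\partial x}\mu(s,\theta^H_s)\,dB_s$, the $\lambda^H$ term drops out by strong orthogonality and the isometry turns \eqref{eq:orthogon} into
\[
E\left[\int \left(h_s - \mu(s,\theta^H_s)\right)\frac{\partial}{\partial x}\mu(s,\theta^H_s)\, d[B]_s\right]=0.
\]
This is the integrated form of the desired identity, and the remaining task is to remove the outer integral.

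Next I would repeat the variational computation of the theorem with a general direction. For a fixed uniformly bounded predictable $\eta$, set $F_\eta(\epsilon) = \| H - \int M(ds,\theta^H_s + \epsilon\eta_s)\|^2_{L^2}$; minimality of $\theta^H$ forces $F_\eta'(0)=0$. Carrying out the same localization by the stopping times $\tau_n$ and the same differentiation-under-the-integral argument that produced \eqref{eq:dF_e=0}, but now with the increment $\epsilon\eta_s$ in place of $\epsilon$, yields
\[
E\left[\int \left(h_s - \mu(s,\theta^H_s)\right)\frac{\partial}{\partial x}\mu(s,\theta^H_s)\,\eta_s\, d[B]_s\right]=0
\]
for every uniformly bounded predictable $\eta$.

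Finally I would conclude pointwise. The process $g_s := \left(h_s - \mu(s,\theta^H_s)\right)\frac{\partial}{\partial x}\mu(s,\theta^H_s)$ is predictable, so $\eta_s = \operatorname{sgn}(g_s)\mathbf{1}_{\{s \le T\}}$ is an admissible test direction; substituting it gives $E[\int_0^T |g_s|\, d[B]_s]=0$, whence $g=0$ holds $\beta$-almost everywhere on $[0,T]$, and letting $T \to \infty$ gives $g \equiv 0$, which is the claimed identity (the symbol $\equiv$ being read, as throughout the paper, modulo $\beta$-null sets). The main obstacle is the rigorous justification of $F_\eta'(0)=0$ for the general direction $\eta$: one must verify that $\theta^H + \epsilon\eta \in \mathcal{I}^M$ so that $F_\eta$ is well-defined, and interchange the $\epsilon$-derivative with both the expectation and the stochastic integral. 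Both points are handled exactly as in Theorem \ref{thm:main_thm}, localizing with $\tau_n$ and dominating the difference quotients by $K|\epsilon\eta|^\delta$ with $K \in L^2(P)$, so no genuinely new estimate is required beyond carrying along the extra bounded factor $\eta$.
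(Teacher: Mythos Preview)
Your approach is correct and is essentially the same as the paper's: both upgrade the single first-order condition of Theorem \ref{thm:main_thm} by testing the minimizer against all bounded predictable directions $\eta$ (equivalently, against the linear tangent space $\{\int \eta_s\,\frac{\partial}{\partial x}M(ds,\theta^H_s):\eta\in\mathcal{I}^B\}$), obtaining $E[\int (h_s-\mu(s,\theta^H_s))\frac{\partial}{\partial x}\mu(s,\theta^H_s)\,\eta_s\,d[B]_s]=0$ for all such $\eta$, and then passing to the pointwise statement. The only cosmetic difference is in this last step: you plug in $\eta=\operatorname{sgn}(g)$, whereas the paper rewrites the relation as $E[(\int g_s\,dB_s)(\int \eta_s\,dB_s)]=0$, invokes density of $\{\int \eta\,dB:\eta\in\mathcal{I}^B\}$ to conclude $\int g_s\,dB_s\equiv 0$, and then applies the It\^o isometry.
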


\begin{proof}
It is clear that $$\mathcal{H}^\prime = \left\{ \int \theta_s \frac{\partial}{\partial x}M(ds, \theta^H_s); \theta \in \mathcal{I}^B\right\} \subset L^2(\mathbb{P}),$$ since $\theta$ is bounded and that $\int \frac{\partial}{\partial x}M(ds, \theta^H_s)$ is in $L^2(\mathbb{P})$ from assumption. By using the definition of the orthogonal projection on linear space, one has that
\begin{equation}
\begin{array}{l}
 \norml \int h_s dB_s - \left(\int M(ds,\theta^H_s) +\int \alpha_s \frac{\partial }{\partial x}M(ds,\theta^H_s)  \right) \normr^2_{L^2} \notag \\
  = \norml \int h_s dB_s - \left(\int M(ds,\theta^H_s) -\int \alpha_s \frac{\partial }{\partial x}M(ds,\theta^H_s)  \right) \normr^2_{L^2}
\end{array}
\end{equation}
for all $\alpha \in \mathcal{I}^B$. This equality is equivalent to 
\begin{equation}
\begin{array}{l}
\mathbb{E}\left[\int \left(h_s - \mu(s, \theta_s) \right)\alpha_s \frac{\partial}{\partial x}\mu(s,\theta^H_s) d[B]_s \right] \notag \\
= \mathbb{E}\left[ \int \left(h_s - \mu(s,\theta^H_s) \right)\frac{\partial }{\partial x}\mu(s,\theta^H_s) dB_s \int \alpha_s dB_s\right] = 0
\end{array}
\end{equation}
for all $\alpha \in \mathcal{I}^B$. Since the set $\left\{\int \alpha_s dB_s;\alpha \in \mathcal{I}^B\right\}$ is dense in  $$\left\{ X\in L^2(\mathbb{P}); X = \int \theta_s dB_s\right\}$$ respect to the $L^2(\mathbb{P})$ norm, hence $\int \left(h_s -\mu(s,\theta^H_s) \right)\frac{\partial }{\partial x}\mu(s,\theta^H_s)dB_s \equiv 0$. The proof is completed by taking the norm, 
\begin{equation}
\begin{array}{l}
\norml \int \left\{h_s -\mu(s,\theta^H_s) \right\}\frac{\partial }{\partial x}\mu(s,\theta^H_s)dB_s \normr^2_{L^2} \notag \\
= \mathbb{E}\left[\int \left\{(h_s -\mu(s,\theta^H_s)) \frac{\partial }{\partial x}\mu(s,\theta^H_s)\right\}^2d[B]_s \right] = 0.
\end{array}
\end{equation}
\end{proof}

Theorem \ref{thm:main_thm} and Corollary \ref{cor:as_solution} give a generalization of the Kunita-Watanabe decomposition. Indeed, the result of Theorem~\ref{thm:main_thm}, can be written as
\[
H =  \int M(dt,\theta^H_t) + L^H
\]
where $L^H = H-\int \theta^H_t dB_t$ and satisfies
\[
\mathbb{E}\left[L^H\int \alpha_t \frac{\partial }{\partial x}M(dt,\theta^H_t) \right] = 0
\]
for all bounded predictable processes $\alpha$. This last affirmation, which is part of the proof of Corollary~\ref{cor:as_solution} shows that $L^H$ is strongly orthogonal to $\int \frac{\partial }{\partial x}M(dt,\theta^H_t) $. Finally, if one puts $\mu(t,x)= xB_t$, then, applying the above results leads to  
\[
H = \int \theta^H_t dB_t + L^H 
\]
where $L^H$ is strongly orthogonal to $B$, which is the Kunita-Watanabe decomposition of $H$. 

The following example is an application of Theorem \ref{thm:main_thm} and Corollary \ref{cor:as_solution} in a case where the minimizing integrand is known explicitly. 

\begin{ex}
Let $\mathcal{F}_t = \sigma\{W^{(1)}_s,W^{(2)}_s;0\leq s \leq t\}$ where $W^{(i)}$, $i=1,2$ are two independent Brownian motions, $\mathbb{F} = \mathcal{F}_T$ for some $T>0$ and define $W_t = \rho W^{(1)}_t + \sqrt{1-\rho^2}W^{(2)}_t, \ \ \forall t\geq 0$ with $\rho \in (0,1)$. Then, define the family of martingales $\{M(x)\}\subset \mathcal{M}_0^2$ by $M(t,x) = e^{xW_t - t\frac{x^2}{2}}-1$ for $0 \leq t \leq T$.  Finally, let $H_T = \left(W^{(1)}_T\right)^2 -T= 2 \int_0^TW^{(1)}_sdW^{(1)}_s$. In the following, we find the predictable process $\theta^H$ which solves 
\begin{equation}\label{ex:prob1}
\inf_{\theta \in \mathcal{I}^M}\mathbb{E}\left[\left(H_T - \int_0^TM(ds,\theta_s) \right)^2 \right].
\end{equation}

 Using It\^{o}'s formula one finds that $M(t,x) =  \int_0^t\left\{ M(s,x)+1 \right\}xdW_s$. Therefore, $\mu(t,x)=\left(  M(t,x)+1 \right)x$ in the integral representation and one can check that $\mu(t,x)$ satisfies the conditions in Theorem \ref{thm:main_thm}.
 
 Differentiating respect to $x$ one finds that $$\frac{\partial}{\partial x} M(t,x) = \int_0^tM(s,x)(xW_s-x^2s+1)dW_s,$$ which leads to 
 
\[
\int_0^T M(ds,\theta_s) = \int_0^T \left\{ M(s,\theta_s)+1 \right\}\theta_s dW_s
\]
and 
\[ 
 \int_0^T\frac{\partial}{\partial x}M(ds,\theta_s) = \int_0^TM(s,\theta_s)(\theta_s W_s - (\theta_s)^2s+1 ) dW_s.
\]
At this point, we find the Kunita-Watanabe decomposition for $H_T$ respect to $W$. This is done by performing an orthogonal projection on the linear space $\left\{ \int_0^T\theta dW_s\right\} \subset L^2(\mathbb{P})$. One finds that $H_T = \int_0^T2 \rho W^{(1)}_s dW_s + \lambda^H_T$. With the Kunita-Watanabe decomposition of $H_T$ known, we can use Corollary \ref{cor:as_solution} to find that 

\begin{equation}\label{eq:sol_example}
\left(2\rho W^{(1)}_s - \theta^H_s M(s,\theta^H_s) \right)\left(\theta^H_sW_s-(\theta^H_s)^2s+1\right)M(s,\theta^H_s) \equiv 0. 
\end{equation}
Defining $\theta^H$ by $\theta^H_s M(s,\theta^H_s) = 2 \rho W^{(1)}_s$ for all $s$ then 
\[
\mathbb{E}\left[ \left(H_T - \int_0^TM(ds,\theta^H_s) \right)^2 \right] = \mathbb{E}[(\lambda^H_T)^2]
\]
which shows that $\theta^H$ is the minimizer. As mention before, since one can verify that the solution to Equation~\eqref{eq:sol_example} is the solution to the optimizing problem, it is irrelevant to verify if $\mathcal{H}^M$ is closed.
\end{ex}

\section{General case}\label{sec:general_case}

In the first part of the paper, it was assumed that the family of processes $\mu$ in the integral representation of the martingales $\{M(x);x\in \mathbb{R}\}$ was known.  However, in general, one only has the existence of the integrand. Therefore, the fact that the assumptions required in Theorem~\ref{thm:main_thm} are in terms of $\mu$ lessen the practicality of the result. 

In this part of the paper, conditions are stated in terms of $M:=\{M(x);x\in \mathbb{R}\}$. Indeed, using the Kunita-Watanabe decomposition, one still has the existence of $\mu$ in the integral representation, but some properties are required to get the result.  In order to generalize Theorem~\ref{thm:main_thm}, the next section is devoted to establish different relationships between the properties of $\{M(x); x\in \mathbb{R}\}$ and the properties of the integrand in the integral representation. With those properties, the main results of Section~\ref{sec:main_res} are recovered.

\subsection{Regularity conditions}\label{sec:regularity}

Recall that the family of martingales is defined by 
\begin{align*}
M:   [0,\infty) \times \mathbb{R} \times \Omega &\rightarrow \mathbb{R} \\
(t,x,\omega) &\mapsto M(t,x,\omega)
\end{align*}
such that $M(x)$ is in $\mathcal{M}^2_0$ for each $x \in \mathbb{R}$. Using the separability assumption of $(\Omega,  \mathbb{F})$, there exists $B \in \mathcal{M}_0^2$ and a family of predictable processes $\{\mu(x); x \in \mathbb{R}\}$ such that $$M(x) = \int \mu(s,x)dB_s.$$ The existence of $\mu(x)$ is a simple application of the Kunita-Watanabe decomposition for each $x$.

To establish the results of Theorem~\ref{thm:main_thm}, the function $x\mapsto \mu(t,x)$ needs to be continuously differentiable. Here, we only have the existence of $\mu$. Fortunately, our next results link the differentiability of $x\mapsto \mu(t,x)$ with the differentiability of $x\mapsto M(x)$.

\begin{thm}\label{thm:cont}
Let $\{M(x)\}$ be a family of random variables in $L^2_0(\mathbb{P})$ such that $x\mapsto M(x)$ is a.s. in $\mathcal{C}^{0}$. Then, for each $t\geq 0$,  $x\mapsto M(t,x)$ is a.s. in $\mathcal{C}^0$ and one can take $x\mapsto \mu
 (t,x)$ to be a.s. continuous for all $t\geq 0$ in the representation $M(x) = \int \mu(t,x)dB_t$.
\end{thm}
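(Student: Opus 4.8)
The plan is to exploit the two exact identities that connect the three objects in play: the conditional-expectation identity $M(t,x)=E[M(x)\mid\mathcal{F}_t]$, which ties the martingale values at time $t$ to the terminal variables, and the It\^o isometry $\|M(x)-M(y)\|_{L^2}^2=E\!\left[\int(\mu(s,x)-\mu(s,y))^2\,d[B]_s\right]$, which ties the integrands to the terminal variables through the measure $\beta$. The assumed regularity lives at the level of the terminal variables $x\mapsto M(x)$, and the strategy is to push it forward along each identity: along the first to obtain continuity of $x\mapsto M(t,x)$, and along the second to obtain continuity of $x\mapsto\mu(t,x)$. Throughout I would fix a compact interval $[a,b]$ and a countable dense set $D=\mathbb{Q}\cap[a,b]$, build the candidate continuous versions on $D$, extend by uniform continuity, and finally exhaust $\mathbb{R}$ by countably many such intervals.

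For the first assertion I would work with the (a.s. finite, since $x\mapsto M(x)$ is a.s. continuous hence locally bounded) modulus of continuity $w_\delta=\sup\{|M(x)-M(y)|:x,y\in[a,b],\,|x-y|\le\delta\}$, which decreases to $0$ a.s. as $\delta\downarrow0$. For rationals $q,q'\in D$ with $|q-q'|\le\delta$, the contraction property of conditional expectation gives $|M(t,q)-M(t,q')|\le E[\,|M(q)-M(q')|\mid\mathcal{F}_t\,]\le E[w_\delta\mid\mathcal{F}_t]$. Conditional monotone convergence then yields $E[w_\delta\mid\mathcal{F}_t]\downarrow0$ a.s., so $q\mapsto M(t,q)$ is a.s. uniformly continuous on $D$; its continuous extension is the desired version of $x\mapsto M(t,x)$, and it agrees with $E[M(x)\mid\mathcal{F}_t]$ because $M(t,q_n)\to E[M(x)\mid\mathcal{F}_t]$ a.s. along any rational sequence $q_n\to x$ by the same convergence theorem. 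The one ingredient that is not purely formal here is a local domination, namely that $\sup_{x\in[a,b]}|M(x)|$ (hence $w_{\delta_0}$) is integrable, which is needed to run the conditional convergence theorem; I would derive this from the $L^2$-boundedness of the family, and it is the first point where the hypotheses must really be used.

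For the second assertion, the It\^o isometry shows that $x\mapsto\mu(\cdot,x)$ is continuous from $\mathbb{R}$ into the Hilbert space $L^2(\beta)$ as soon as $x\mapsto M(x)$ is continuous into $L^2(P)$; the latter follows from the a.s. continuity together with the same local domination via Vitali's theorem. What remains is to manufacture, from this $L^2(\beta)$-continuous family, a version that is genuinely continuous in $x$ for $\beta$-almost every $(s,\omega)$. I would again fix the values $\mu(\cdot,q)$ on $D$, choose a rapidly converging nested sequence of finite grids whose successive $L^2(\beta)$-increments are summable (possible by uniform continuity on $[a,b]$), and use a Borel--Cantelli argument to control, $\beta$-a.e., the oscillation of the grid values; the pointwise limit then defines an a.s. continuous version agreeing with $\mu$ in $L^2(\beta)$.

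The main obstacle is precisely this last upgrade: the It\^o isometry only delivers continuity of $x\mapsto\mu(\cdot,x)$ in the integrated $L^2(\beta)$ sense, whereas the claim is pointwise ($\beta$-a.e.) continuity in $x$, and bare continuity carries no modulus that a Kolmogorov--Chentsov estimate could exploit. The Borel--Cantelli/grid construction is how I would bridge this gap, and verifying that the resulting $\beta$-a.e. limit is simultaneously continuous in $x$ and a legitimate representative of the integrand (so that $M(x)=\int\mu(s,x)\,dB_s$ still holds for the continuous version) is where the real work lies. The local domination needed in the first step is a secondary and more routine obstacle.
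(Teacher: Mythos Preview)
Your route and the paper's coincide on the core mechanism: both reduce continuity of $\mu$ to $L^2$-continuity of $x\mapsto M(x)$ via the It\^o isometry. The difference is in how that $L^2$-continuity is obtained. The paper does not argue via a dominating envelope; instead it localizes by setting $A_n=\{\sup_{x,y\in K}|M(x)-M(y)|\le n\}$ and $M^n(x)=M(x)\mathbf{1}_{A_n}$, so that $|M^n(x)-M^n(y)|\le n$ uniformly and dominated convergence applies directly, while $M^n(x)\to M(x)$ in $L^2$ for each fixed $x$ because $|M^n(x)-M(x)|\le|M(x)|\in L^2$. This truncation is exactly what your ``local domination'' step is missing: from the bare hypotheses (each $M(x)\in L^2_0(P)$ and a.s.\ continuity in $x$) one cannot conclude that $\sup_{x\in[a,b]}|M(x)|$ or $w_{\delta_0}$ lies in $L^1$, so neither your conditional-MCT argument for $M(t,x)$ nor your Vitali argument for $L^2$-continuity of $M(x)$ goes through as written. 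Inserting the paper's truncation (or, equivalently, running your conditional-expectation argument on $M^n$ first and then letting $n\to\infty$) repairs this.

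Where you go further than the paper is in the passage from $L^2(\beta)$-continuity of $x\mapsto\mu(\cdot,x)$ to a version that is genuinely continuous in $x$ for $\beta$-a.e.\ $(s,\omega)$. The paper, having established $E[\int(\mu(s,x)-\mu(s,y))^2\,d[B]_s]\to0$, simply asserts that for fixed $\omega$ one has $\mu(s,x+h)\to\mu(s,x)$ outside a $d[B]$-null set and that $\mu$ can be chosen continuous there. Your observation that mere $L^2(\beta)$-continuity does not automatically deliver such a version is correct, and the grid/Borel--Cantelli construction you sketch is the right kind of argument to supply; the paper leaves this implicit. Your separate conditional-expectation treatment of the continuity of $x\mapsto M(t,x)$ is likewise additional --- the paper's proof does not address that conclusion on its own.
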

\begin{proof}
Given a compact subset $K\subset \mathbb{R}$, let 
\[
A_n = \left\{\sup_{x,y\in K}|M(x)-M(y)|\leq n \right\}
\]
and define $M^n(x) = M(x)\mathbf{1}_{A_n}$  for all $x\in \mathbb{R}$. One has that $M^n(x)\rightarrow M(x)$ a.s. and that $|M^n(x)|\leq |M(x)|$ for all $n$ and each $x\in \mathbb{R}$. By the dominated convergence theorem, one has that $$\mathbb{E}[(M^n(x)-M(x))^2]\rightarrow 0.$$

Now, let $\{\mu(x)\}$ be such that $M(x) = \int \mu(t,x)dB_t$. For $x,y\in K$, one finds that

\begin{align}
 & \mathbb{E}\left[\int (\mu(s,x) - \mu(s,y))^2d[B]s \right]  = \mathbb{E}\left[(M(x)-M(y))^2 \right] \notag \\
 & \quad \leq  \mathbb{E}[(M(x) - M^n(x))^2]+ \mathbb{E}[(M^n(x)- M^n(y))^2] \notag \\ 
    & \qquad + \mathbb{E}[(M^n(y) - M(y))^2] \notag .
\end{align}
Using the dominated convergence theorem and the fact that $M(x)$ is a.s. in $\mathcal{C}^0$ one has that $\lim_{x \to y}\mathbb{E}[(M^n(x)-M^n(y))^2]=0$ for all $n$. Finally, from the convergence of $M^n(x)$ to $M(x)$ in $L^2(\mathbb{P})$, for all $\epsilon >0$ one can find $N_\epsilon$ such that $\mathbb{E}[(M(x)- M^n(x))^2]< \epsilon$ and  $\mathbb{E}[(M(y)- M^n(y))^2]<\epsilon$ for all $n>N_\epsilon$. 

We see that for a fixed $\omega \in \Omega$, one must have that $\mu(x+h,s)\rightarrow \mu(x,s)$ except if $s\in \Gamma(\omega)\in \mathcal{B}(\mathbb{R})$ where $\int_{\Gamma(\omega)}d[B]_s(\omega)=0$. But since $\Gamma$ is the set where the martingale $B$ is constant, one can choose $x\mapsto \mu(x,s)$ to be continuous for $s\in \Gamma$. Finally, for $s \notin \Gamma$ we have that $\mu(s,x+h)\rightarrow \mu(s,x)$ a.s.
\end{proof}

In the representation $M(x) = \int \mu(s,x)dB_s$, one has that the process $\{\mu(t,x)\}_{t\geq 0}$ is unique for each $x$. However, it is not clear if the family of processes $\{\mu(x);x\in \mathbb{R}\}$ is unique, which could lead to $M$ being ill-defined. The next result shows that if $x\mapsto M(x)$ is almost surely continuous, then the representation is unique, up to an equivalence class of processes.  This class of processes is defined by processes equal up to a null set respect to the measure $\beta : \mathcal{B}([0,\infty))\times \mathbb{F}\rightarrow [0,\infty)$ given by $\beta(A) = \int \int \mathbf{1}_{A}(s,\omega) d[B]_s\mathbb{P}(d\omega)$, where $\mathcal{B}([0,\infty ))$ is the Borel sigma-algebra on $[0,\infty)$ and  $\mathbf{1}_{A}$ is the indicator function of the set $A$.

\begin{thm}
If $x\mapsto M(x)$ is a.s. in $\mathcal{C}^0$, then the representation $M(x) = \int \mu(s,x)dB_s$ is unique except on a set $A \subset \mathcal{B} ([0,\infty)) \times \mathbb{F}$ where $\mathbb{E}[\int_{A} 1d[B]_s] = 0$.
\end{thm}
\begin{proof}
Suppose that $M(x) = \int\mu(s,x)dB_s = \int \nu(s,x)dB_s$. 
\[
\mathbb{E}\left[\int (\mu(s,x)-\nu(s,x))^2d[B]_s\right] = \mathbb{E}[(M(x) - M(x))^2]=0.
\]
Therefore, $\mu(s,x) =\nu(s,x)$ except maybe on a set $\Gamma_x\in  \mathcal{B}([0,\infty))\times \mathbb{F}$ such that $\beta(\Gamma_x) = 0$.

Let $\{x_i\}_{i\in \mathcal{I}}$ be a dense subset of $\mathbb{R}$ where $\mathcal{I}$ is a countable index set. For each $i\in \mathcal{I}$, there exists $A_i \in \mathcal{B}([0,\infty))\times \mathbb{F}$ such that $\beta(A_i) = 0$ and where $\mu(s,x_i) = \nu(s,x_i)$ on $(A_i)^c$. Since $x\mapsto \mu(s,x)$ is a.s. continuous, $\mu(s,x) = \nu(s,x)$ except maybe on $\cup_{i\in \mathcal{I}} A_i$ where $\beta(\cup_{i\in \mathcal{I}} A_i) = 0$.
\end{proof}

From the previous theorem, one can say that the integrand in the representation of $M$ is almost surely unique respect to the sigma finite measure $\beta$. One gets nonetheless that $M(x) = \int \mu(s,x)dB_s$ is well defined. For a fixed $x\in \mathbb{R}$, let $\hat{\mu}(x)$ be such that $\mu(t,x,\omega)\neq \ \hat{\mu}(t,x,\omega)$ on $A\subset \mathcal{B}([0,\infty))\times \mathbb{F}$ where $\beta(A) = 0$ and $\mu(t,x,\omega) = \ \hat{\mu}(t,x,\omega)$ on $A^c$. Assume that $\mathbb{P}(\bigcup_{t\geq 0}A(t,\cdot))>0$ where $A(t,\cdot) = \{\omega \in \Omega; (t,\omega)\in A\}$ is a cross section of $A$. For each $\omega$, one must have that $\int_{A(\cdot, \omega)}d[B]_s(\omega) = 0$, which means that $A(\cdot, \omega)$ is a set of times where the martingale $B$ is constant, hence the value of $\mu(t,x,\omega)$ for $t\in A(\cdot, \omega)$ has no impact on the value $M(x)$. On the other hand, if $\int_{A(\cdot, \omega)}d[B]_s(\omega)>0$, then one must have that $\mathbb{P}(\bigcup_{t\geq 0}A(\cdot, t))=0$ and $\hat{\mu}(x)$ is actually a modification of $\mu(x)$.

To establish conditions for the differentiability of $\mu(x)$ we first define 
\[
\mathcal{C}^{m,\delta}(K) = \left\{f: \mathbb{R}\rightarrow \mathbb{R}: \left|\frac{\partial^m }{\partial x^m}f(x) - \frac{\partial^m }{\partial x^m}f(y) \right| \leq K|x-y|^\delta \right\}
\]
which is the set of $m$ times continuously differentiable functions with $\delta$-H\"older $m$-th derivative. In the following theorem, we need $M(x)$ to be in $\mathcal{C}^{1,\delta}(K)$, but we can allow $\delta$ and $K$ to depend on $\omega$ which gives a more general result than requiring $M(x)$ to be uniformly in $\mathcal{C}^{1,\delta}(K)$.

\begin{thm}\label{thm:diff}
Let $\{M(x);x\in \mathbb{R}\}$ be a family of $L^2_0(\mathbb{P})$ random variables such that $M(x)$ is a.s. in $C^{1,\delta}(K)$, where $K$ and $\delta$ are positive $\mathbb{F}-$measurable and $K$ is in $L^2(\mathbb{P})$.  Then, $\left\{\frac{d}{dx}M(x);x\in \mathbb{R} \right\}$ is a family of $L^2(\mathbb{P})$ random variables and there exists a family of predictable processes $\{\mu(x);x\in \mathbb{R}\}$ such that $x\mapsto \mu(x)$ is a.s. in $C^1$, 
\[
M(x) = \int \mu(s,x)dB_s \text{ and } \frac{\partial }{\partial x}M(x) = \int\frac{\partial}{\partial x}\mu(s,x)dB_s.
\]
\end{thm}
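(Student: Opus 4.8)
The plan is to build the desired $C^1$ representative of $\mu$ by first representing the spatial derivative $\frac{d}{dx}M(x)$ as a stochastic integral and then recovering $\mu$ itself through integration in the spatial variable, matching the two by the uniqueness theorem.

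First I would verify that $M'(x):=\frac{d}{dx}M(x)\in L^2_0(P)$ for every $x$. Since $M(x)$ is a.s. in $\mathcal{C}^{1,\delta}(K)$, the fundamental theorem of calculus gives $\frac{M(x+h)-M(x)}{h}=\frac1h\int_0^h M'(x+u)\,du$, and the H\"older bound yields $\left|\frac{M(x+h)-M(x)}{h}-M'(x)\right|\le \frac{Kh^\delta}{\delta+1}$. Taking $h=1$ and using $\delta>0$ produces the pointwise bound $|M'(x)|\le |M(x+1)-M(x)|+K$, whose right-hand side lies in $L^2(P)$ because $M(x+1),M(x)\in L^2$ and $K\in L^2$ by hypothesis. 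The same bound dominates the difference quotients, so dominated convergence also gives $E[M'(x)]=\lim_h \frac{E[M(x+h)]-E[M(x)]}{h}=0$, hence $M'(x)\in L^2_0(P)$.

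Next, for each $x$ the martingale representation theorem furnishes $\nu(\cdot,x)$ with $M'(x)=\int\nu(s,x)\,dB_s$. Because $M(x)$ is a.s. $\mathcal{C}^1$, the family $\{M'(x)\}$ is a.s. in $\mathcal{C}^0$, so Theorem~\ref{thm:cont} applies to $\{M'(x)\}$ and lets me choose $x\mapsto\nu(s,x)$ a.s. continuous. Fixing a reference point $x_0$, I would then define
\[
\tilde\mu(s,x):=\mu(s,x_0)+\int_{x_0}^x\nu(s,y)\,dy,
\]
which, by continuity of $\nu$ in $y$, is a.s. $\mathcal{C}^1$ in $x$ with $\frac{\partial}{\partial x}\tilde\mu(s,x)=\nu(s,x)$. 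It then remains to identify $\tilde\mu$ with $\mu$. Computing $\int\tilde\mu(s,x)\,dB_s$ and interchanging the spatial integral with the stochastic integral gives
\[
\int\tilde\mu(s,x)\,dB_s=M(x_0)+\int_{x_0}^x\Big(\int\nu(s,y)\,dB_s\Big)dy=M(x_0)+\int_{x_0}^x M'(y)\,dy=M(x),
\]
where the last step is the fundamental theorem of calculus for the a.s.\ $\mathcal{C}^1$ map $y\mapsto M(y)$. Since also $M(x)=\int\mu(s,x)\,dB_s$, the uniqueness theorem forces $\tilde\mu(\cdot,x)=\mu(\cdot,x)$ up to a $\beta$-null set for each $x$; replacing $\mu$ by $\tilde\mu$ yields a $\mathcal{C}^1$ family with $\frac{\partial}{\partial x}M(x)=M'(x)=\int\nu(s,x)\,dB_s=\int\frac{\partial}{\partial x}\mu(s,x)\,dB_s$.

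The main obstacle is the interchange of the spatial Lebesgue integral $\int_{x_0}^x dy$ with the stochastic integral $\int\cdot\,dB_s$, i.e. a stochastic Fubini theorem. To invoke it I must check joint measurability of $(s,\omega,y)\mapsto\nu(s,y)$, available from the continuous version of $\nu$, together with an integrability condition of the form $\int_{x_0}^x E\big[\int\nu(s,y)^2\,d[B]_s\big]^{1/2}\,dy<\infty$. By the It\^o isometry $E\big[\int\nu(s,y)^2\,d[B]_s\big]=E[(M'(y))^2]$, so this reduces to local boundedness of $y\mapsto E[(M'(y))^2]$ on the compact interval. That in turn follows from the first-step bound $(M'(y))^2\le 2(M(y+1)-M(y))^2+2K^2$, the continuity of $y\mapsto E[M(y)^2]$ established in the proof of Theorem~\ref{thm:cont}, and $K\in L^2$. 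Once this integrability is secured, the Fubini interchange together with the uniqueness theorem closes the argument.
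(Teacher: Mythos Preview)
Your proof is correct and takes a genuinely different route from the paper. The paper argues directly: by the mean value theorem and the H\"older bound it shows that $\frac{M(x+h)-M(x)}{h}\to M'(x)$ in $L^2(P)$ (so $M'(x)\in L^2(P)$ since $L^2$ is closed), whence by the It\^o isometry $\frac{\mu(\cdot,x+h)-\mu(\cdot,x)}{h}\to\nu(\cdot,x)$ in $L^2(\beta)$, and then concludes that one may take $x\mapsto\mu(s,x)$ to be a.s.\ $C^1$. You instead build the $C^1$ representative explicitly by integrating the continuous version of $\nu$ in the spatial variable and invoking a stochastic Fubini theorem to identify the result with $M(x)$. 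The paper's approach is shorter and avoids Fubini, but its final step---passing from $L^2(\beta)$-convergence of difference quotients at each fixed $x$ to the existence of an a.s.\ $C^1$ version of $\mu$---is only sketched; your explicit construction makes that version selection rigorous, at the cost of verifying the Fubini hypothesis, which you carry out via the uniform-in-$y$ $L^2$ bound on $M'(y)$.
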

\begin{proof}
First, for $h\in \mathbb{R}$ we find that 
\begin{align*}
\left|\frac{M(x+h)-M(x)}{h}-\frac{d }{d x}M(x) \right| = \left|\frac{d }{d x}M(\alpha(x,h)) -\frac{d }{d x}M(x)  \right|<K|h|^\delta,
\end{align*}
where $\alpha(x,h)$ is a random variable in $(x-h,x+h)$. Moreover, if $|h|<1$ we have that $\mathbb{E}[K^2|h|^{2\delta}]<\infty $. Then,
\begin{align}
& \lim_{h\to 0}\mathbb{E}\left[\left(\frac{M(x+h)-M(x)}{h} - \frac{d}{dx}M(x) \right)^2 \right] \notag \\
& \quad = \lim_{h \to 0}\mathbb{E}\left[ \left(\frac{d }{d x}M(\alpha(x,h)) -\frac{d }{d x}M(x)\right)^2 \right] \notag \\
& \qquad \leq \lim_{h\to 0}\mathbb{E}[K^2|h|^{2\delta}] = 0.
\end{align}
We then conclude that $\frac{d}{dx}M(x)$ is in $L^2(\mathbb{P})$ since it is a closed space. 

Now, from Theorem \ref{thm:cont}, there exists a family of predictable processes $\nu$ such that $\frac{d}{dx}M(x) = \int \nu(s,x)dB_s$. Using the same argument as above, we have that 
\begin{align}
& \lim_{h\to 0}\mathbb{E}\left[\left(\frac{M(x+h)-M(x)}{h} - \frac{d}{dx}M(x) \right)^2 \right] \notag \\
& \quad = \lim_{h \to 0}\mathbb{E}\left[ \int \left(\frac{\mu(s,x+h)-\mu(s,x)}{h} -\nu(s,x)\right)^2d[B]_s \right] = 0. \notag
\end{align}
We conclude that $x\mapsto \mu(s,x)$ is almost surely continuously differentiable almost everywhere respect to $d[B]_s$. As in Theorem \ref{thm:cont}, one can take $\mu(s,x)$ to be a.s. continuously differentiable for all $s\geq 0.$
\end{proof}

\subsection{General version of the main result}\label{sec:main_thm_general}

The last theorem provides conditions on $M(x)$ so that the integrand $\mu(x)$ in the integral representation has the regularity required in Theorem~\ref{thm:main_thm}. Therefore, it is possible to recover the main result without assuming that the integral representation is known.

\begin{thm}[Main theorem, general version]\label{thm:main_generalversion}

Let $\{M(x);x\in \mathbb{R}\}$ be a family of martingales in  $\mathcal{M}^2_0$ where $x\mapsto M(x)$ is a.s. in $C^{1,\delta}(K)$ where $\delta$ and $K$ are positive and $\mathbb{F}$-measurable  and where $K$ is in $L^2(\mathbb{P})$. Let $H\in L^2_0(\mathbb{P})$ and  assume that $\mathcal{H}^M$ is closed in $L^2(\mathbb{P})$. Then, there exists $\theta^H \in \mathcal{I}^M$  such that
\[
H = \int M(ds, \theta^H_s) + L^H,
\]
where $L^H = H - \int M(ds,\theta^H_s)$ is orthogonal to $\int \frac{\partial}{\partial x}M(ds,\theta^H_s)$, i.e.
\begin{equation}
\mathbb{E}\left[\left( H - \int M(ds,\theta^H_s)\right) \int \frac{\partial }{\partial x}M(ds,\theta^H_s)\right]=0.
\end{equation}
Moreover, \[ \norml H - \int M(ds,\theta^H_s)\normr_{L^2} = \inf_{X\in \mathcal{H}^M} ||H- X||_{L^2}.\]

\end{thm}

\begin{proof}
From Theorem~\ref{thm:diff}, $M(x) = \int \mu(t,x) dB_t$ where $x\mapsto \mu(t,x)$ is a.s. in $\mathcal{C}^1$. Finally, since $\mathcal{H}^M$ is assumed to be closed, one can apply the results of Theorem~\ref{thm:main_thm}.
\end{proof}

In conclusion, the next section shows how the above results, in conjunction with the Clark-Ocone formula, \cite{Nualart_2006}, can be applied to martingales derived from polynomial functions of Brownian martingales. 

\section{Polynomial functions of Brownian martingales}\label{sec:example}

Theorem~\ref{thm:main_generalversion} extends the result of Theorem~\ref{thm:main_thm} to the more general case where the integral version of the martingales is not known. However, for modeling and application purposes, the canonical probability space of the Brownian motion is often rich enough. On this space, all continuous martingales are Brownian integrals and, in many cases, the integrand is either determined by the model or can be identified. 

In the following, we show how one can build a family of martingales from a polynomial function of Brownian integrals and get an explicit expression for the integrand in the integral representation using the Clark-Ocone formula. Then, the solution to the minimization problem \eqref{eq:main_prob} is expressed as the zero of a polynomial function in one variable.  

Let $\mathcal{P}_{n,k}$ be the set of real polynomials of degree $k$ with $n$ variables. To denote such a polynomial, let $\{\delta^{(i)}\}_{i=1}^{N_{n,k}}$ be a sequence where $\delta^{(i)} \in \{0,1,\dots, k\}^n$, $N_{n,k} = {{k+n-1}\choose{n-1}}$,  and $\sum_{j=1}^n \delta_j^{(i)} = k$ for all $i$.  Then, for $y=(y_1,\dots, y_n)$, let $y^{\delta^{(i)}} =\Pi_{j=1}^n y_j^{\delta_j^{(i)}}$ and define $p_{\{\delta\}}(y) = \sum_{i=1}^{N_{n,k}}d_{i} y^{\delta^{(i)}} \in  \mathcal{P}_{n,k}$, where $\{d_i\}_{i=1}^{N_{n,k}}$ are real coefficients.

Now, assume that $(\Omega, \mathcal{F}, \mathbb{F}, \mathbb{P})$ is the canonical probability space of the Brownian motion $B$. Let $\Theta = (\Theta_1,\dots, \Theta_n)$ be a vector of Brownian integrals, that is, $\Theta_i = \int \theta_i(t)dB_t$ for some predictable processes $\theta_i$ and assume that for each $i$, $\mathbb{E}[|\Theta_i|^m]<\infty$ for $m=2,\dots, k$. The family of martingales $\{M(x);x\in \mathbb{R}\}$ is constructed from the polynomial function 
\[
M(x) = \sum_{i=0}^k a_ix^i p^{(i)}_{\{\delta\}} (\Theta).
\]
where $ p^{(i)}_{\{\delta\}} (y) \in \mathcal{P}_{n,k-i}$ and $\{a_i\}_{i=0}^k$ are real coefficients.
Without loss of generality, it is assumed that $\mathbb{E}[M(x)] = 0$ for all $x$. Otherwise, by defining $\tilde{M}(x) = M(x) -\mathbb{E}[M(x)]$, one gets that $\tilde{M}(x)$ is centered and is still a polynomial function of the variable $x$. From the Clark-Ocone formula, one finds that 
\[
M(x) = \int \mu(t,x)dB_t
\]
where $\mu(t,x) = \sum_{i=0}^k a_i x^i\mathbb{E}[D_t  p^{(i)}_{\{\delta\}} (\Theta)|\mathcal{F}_t]$ and where $D_tp^{(i)}_{\{\delta\}}(\Theta) = \sum_{j=1}^n\frac{\partial}{\partial y_j} p^{(i)}_{\{\delta\}}(\Theta)\theta_j(t)$ is the Malliavin derivative of $p^{(i)}_{\{\delta\}}(\Theta)$ at time $t\geq 0$.  

Now, restrict $H$ to a polynomial function of $\Theta$, i.e. $H = q_{\gamma}(\Theta)$ where $q_{\gamma}(y) \in \mathcal{P}_{n,l}$. Assuming again that $\mathbb{E}[H] = 0$, on has that 
\[
H = \int h_t dB_t
\]
where $h_t = \mathbb{E}[D_tq_{\gamma}(\Theta)|\mathcal{F}_t] = \sum_{j=1}^n \theta_j(t)\mathbb{E}[\frac{\partial}{\partial y_j}q_{\gamma}(\Theta)|\mathcal{F}_t]$. Finally, using Theorem~\ref{thm:main_thm} and Corollary~\ref{cor:as_solution}, there exists $\theta^H$ such that
\[
H = \int M(dt,\theta^H_t) + L^H
\]
where $\theta^H$ satisfies
\begin{equation}
\begin{array}{rl}

& \left( h(t) - \mu(t,\theta^H_t) \right)\frac{\partial}{\partial x}\mu(t,\theta^H_t) \equiv 0 \notag \\
\Leftrightarrow & \left( \sum_{j=1}^n\theta_j(t)\mathbb{E}[\frac{\partial}{\partial y_j}q_{\gamma}(\Theta)|\mathcal{F}_t] - \sum_{i=0}^k\tilde{a}_i (\theta^H_t)^i \right)\sum_{i=1}^k i\tilde{a}_i (\theta^H_t)^{i-1} \equiv 0 \notag
\end{array}
\end{equation}
where $\tilde{a}_i = a_i \sum_{j=1}^n\theta_j(t)\mathbb{E}[\frac{\partial}{\partial y_j} p^{(i)}_{\{\delta\}}(\Theta)|\mathcal{F}_t].$

In the simpler case where $p^{(i)}_{\{\delta\}}
(y) = p_{\{\delta\}}(y)$ for all $i = 1,\dots, N_{n,k}$ then, $\theta^H$ satisfies
\[
\sum_{i=0}^ka_i (\theta_t^H)^i = \frac{  \sum_{j=1}^n\theta_j(t)\mathbb{E}[\frac{\partial}{\partial y_j}q_{\gamma}(\Theta)|\mathcal{F}_t]}{  \sum_{j=1}^n\theta_j(t)\mathbb{E}[\frac{\partial}{\partial y_j}p_{\delta}(\Theta)|\mathcal{F}_t]}
\]
or
\[
\sum_{i=1}^k i a_i (\theta^H_t)^{i-1}=0.
\]

In the above, the conditional distribution of the $n$-dimensional Gaussian vector $\Theta|_{\mathcal{F}_t}$ is $\mathcal{N}_n(\Theta_t,\Sigma_t)$ where $\Theta_t = \left(\int_{[t,\infty)}\theta_1(t)dB_t,\dots,\int_{[t,\infty)}\theta_n(t)dB_t\right)$ and $[\Sigma_t]_{ij} = \int_{[t,\infty)}\mathbb{E}[\theta_i(t)\theta_j(t)]dt$. Hence, the integrands $h_t$ and $\mu(t,x)$ are defined by the first $k$ moments of the $n$-dimensional normal distribution $\mathcal{N}_n(\Theta_t,\Sigma_t).$

%\acks
\section*{\ackname}
I would like to thank my colleagues at UQAM, Jean-Fran\c{c}ois Renaud and Anne Mackay for all the discussions. I also thank Fr\'ed\'eric Godin, Cody Hyndman and the graduate students at Concordia for their time and comments on this work.

\bibliographystyle{plain}
\bibliography{martingale_decomposition_references}
\end{document}